\newtheorem{theorem}{Theorem}[section]
\newtheorem{lemma}[theorem]{Lemma}
\newtheorem{corollary}[theorem]{Corollary}
\theoremstyle{definition}
\newtheorem{definition}[theorem]{Definition}
\theoremstyle{remark}
\newtheorem{remark}[theorem]{Remark}
\def\log{\mathrm{log}}
\def\det{\mathrm{det}}
\newcommand{\Rmnum}[1]{\expandafter\@slowromancap\romannumeral #1@}
\newcommand{\un}{{\rm 1}\mkern -4mu{{\rm l}}}
\begin{document}
\setcounter{page}{1}

\title[logarithmic  submajorisations inequalities]{logarithmic submajorisations
 inequalities for operators in a finite von Neumann algebra}

\author[C. Yan ]{Cheng Yan}
\address{ College of Mathematics and Systems Sciences, Xinjiang
University,Urumqi 830046, China.}
\email{\textcolor[rgb]{0.00,0.00,0.84}{yanchengggg@163.com}}

\author[Y. Han]{Yazhou Han$^{*}$
}
\thanks{$^{*}$ Corresponding Author}
\address{College of Mathematics and Systems Science, Xinjiang
University, Urumqi 830046, China}
\email{\textcolor[rgb]{0.00,0.00,0.84}{hanyazhou@foxmail.com}}

\subjclass[2010]{ Primary 46L52; Secondary 47A63 }

\keywords{von Neumann algebra; logarithmic  submajorisations; Fuglede-Kadison determinant inequality}

\begin{abstract} The aim of this paper is to study the logarithmic  submajorisations
inequalities for operators in a finite von Neumann algebra.
Firstly, some logarithmic  submajorisations inequalities due to Garg and  Aulja are
extended to the case of  operators in a finite von Neumann algebra.
As an application, we get some new Fuglede-Kadison  determinant
inequalities of operators in that circumstance.
Secondly, we improve and generalize to the setting of finite von Neumann algebras,
a generalized H\"{o}lder type generalized singular numbers inequality.
\end{abstract}
\maketitle

\section{Introduction}
The views  of determinants  and majorization inequalities in mathematical analysis are
based on the convexity(or concavity) of function and the  singular values equations and inequalities.
For instance,  Weyl's Theorem in matrix analysis involves that idea in the proof.
Using singular values  inequalities and log convexity of determinants inequalities, Key Fan
obtained two different forms of generalizations of Weyl's theorem in \cite{Fan1949,Fan1950}.
The singular values of $A\in \mathbb{M}_n$, the space of $n\times n$ complex matrices, i.e.,
the eigenvalues of the matrix $|A|:=(A^*A)^{\frac{1}{2}}$, enumerated in decreasing order,
will be denoted by  $s_j(A)$, $ j=1,~2, \ldots,~n.$

Rotfel'd \cite{R1969} proved  a  determinants inequality:
\begin{equation}\label{ineq 1.1}
\det(I_n + r|A + B|^p) \leq  \det(I_n + r|A|^p) \det(I_n + r|B|^p),
\end{equation}
where $A,~B\in M_n$, $r > 0$, $0 < p \leq 1$ and  $I_n$ is the identity matrix.
Subsequently, Garg-Aujla \cite{GA2017} proposed the following
singular value inequality for matrices:
\begin{equation}\label{ineq 1.002}
\Pi_{j=1}^k s_j(I_n+f(|A+B|))\leq \Pi_{j=1}^k s_j(I_n+f(|A|))s_j(I_n+f(|B|)),
\end{equation}
where $1\leq k \leq n$ and $A,~B\in \mathbb{M}_n$ and
$f: [0,+\infty) \rightarrow [0,+\infty)$
is an operator concave function with $f(0)=0$.
This is a refinement of the inequality (\ref{ineq 1.1}).
They also proved that for $A,~B \in \mathbb{M}_n$ and $1 \leq r \leq 2$,
\begin{equation}\label{ineq 1.003}
\Pi_{j=1}^k s_j( |A+B|^r)\leq \Pi_{j=1}^k s_j(I_n+ |A|^r)s_j(I_n+ |B|^r)
\end{equation}
holds for $1 \leq k \leq n$.
Very recently,  a new proof for inequalities (\ref{ineq 1.002}) and
(\ref{ineq 1.003}) were gave by Zhao in \cite{Z2018}. Meanwhile, they showed that inequality (\ref{ineq 1.002}) also holds
when $f$ is a nonnegative concave function.
Liu-Poon-Wang \cite{Wang2017} proved  a generalized H\"{o}lder type eigenvalue inequality:
\begin{equation}\label{ineq 1.004}\Pi_{j=1}^k (1-s_j(|A_1\cdots A_m|)^r)\geq \Pi_{j=1}^k\Pi_{i=1}^m(1-s_j(|A_i|)^{rp_i})^{\frac{1}{p_i}},\end{equation}
where   $A_1,\ldots,A_m$ are $n\times n$
contractive matrices and $p_1,\ldots,p_m>0$, with $\sum_{i=1}^m \frac{1}{p_i}=1$,
for each $k = 1, 2,\ldots , n$ and $r\geq 1$.

With the help of generalized singular numbers' method,  Fack \cite{Fack1983} and Fack-Kosaki \cite{FK1986}
gave the inequality (\ref{ineq 1.1}) and (\ref{ineq 1.002})
for operators in a semi-finite von Neumann algebra.
It is our intention in this paper to indicate that
the logarithmic submajorisations inequalities
of  Garg-Aulja  may be extended to the general setting of
operators in a finite von Neumann algebra.
And we prove a generalized H\"{o}lder type generalized singular numbers inequality
in the case of operators in a finite von Neumann algebra.

 This article is organized as follows.
 In section 2, we setup the background for our discussion. Along with setting up notation, we present a primer on the theory of von Neumann algebras, two kinds of generalized singular numbers, Fuglede-Kadison determinant of measure operators affiliated with  a finite von Neumann algebras.
In section 3, we collect some   lemmas of  logarithmic submajorisations inequalities under the case of the monotone concave function on
$[0,+\infty)$ and extend Garg and  Aulja's  results to operator case.
The crux of the discussion is in the final section
where some basic equations and inequalities of two kinds of generalized singular numbers are proved.  And we propose a generalized H\"{o}lder type generalized singular numbers inequality in this section.

\section{Preliminaries}
Suppose that $\mathcal{H}$ is a separable Hilbert space over the field $\mathbb{C}$ and
$\mathbb{I}$ is the identity operator in $\mathcal{H}$. We will denote by
  $\mathcal{B}(\mathcal{H})$ the $*$-algebra of all linear bounded operators in $\mathcal{H}$.
Let $\mathcal{M}$ be a $*$-subalgebra of
$\mathcal{B}(\mathcal{H})$ containing the identity operator $\mathbb{I}$.
Then $\mathcal{M}$ is called a von Neumann algebra
if $\mathcal{M}$ is weak* operator closed.
Let $\mathcal{M}$  be a finite von
Neumann algebra, with a finite normal faithful trace $\tau$,  acting on the separable Hilbert space $\mathcal{H}$,
 and $\mathcal{M}_+$ its positive part.  We refer to \cite{FK1986, PX2003} for noncommutative integration.

Let $x$ be a closed densely defined operator and $x = u|x|$ its polar decomposition,
where $|x|=(x^*x)^{\frac12}$ and $u$  is a partial isometry.
Then $x$ is affiliated with $\mathcal{M}$ iff $u\in\mathcal{M}$ and $|x|$ is affiliated with $\mathcal{M}$.
 For convenience, we assume $\tau(\un)=1$ in the following.

For  $x\in \mathcal{M}$, we define the generalized singular numbers by
\[\mu_t(x)=\inf \{\lambda>0: \tau(e_\lambda(|x|)\leq t) \},~t> 0,\]
where the operators $e_s(|x|)$ are the spectral projection of $|x|$.
We denote simply
by $\mu(x)$ the function $t\rightarrow\mu_t(x)$.
If we
consider the algebra $\mathcal{M}=L^\infty([0, 1])$ of all Lebesgue measurable essentially bounded
functions on $[0, 1]$. 
For $f\in L^\infty([0, 1])$, the
decreasing rearrangement $f^*$ of the function $f$ is given by
\[f^*(t)=\inf\{s\in \mathbb{R}: m(\{h\in[0,1]: |f(h)|>s\})\leq t\}, 0<t<1.\]
Then $\mu_t(f)=f^*(t)$.

As a matter of convenience, we state some properties
of generalized singular numbers as follows without proof(see \cite{FK1986}).
The function $t\rightarrow \mu_t(x)$ is a nonincreasing right-continuous function
on $(0, \infty)$ and
\begin{equation}\label{equa 2.1}
\mu_t(x^*x)=\mu_t(xx^*)~\mbox{and}~\mu_t(uxv)\leq\|u\|\mu_t(x)\|v\|,
\end{equation}
where $x, u , v \in \mathcal{M}$.
If $f$ is a continuous increasing function  on $[0, \infty)$ with $f(0) \geq 0$, then
\begin{equation}\label{pre 1.2}
\mu_t(f(x))=f(\mu_t(x))
  \end{equation}
and
\begin{equation*}\label{pre 1.002}
\tau(f(x))=\int_0^{\tau(\un)} f(\mu_t(x))dt.
\end{equation*}
See \cite{FK1986} for basic properties and detailed information
on generalized singular number of $x$.

For  $x\in \mathcal{M}$ we now introduce the nonincreasing left-continuous function
\[\mu_t^l(x)=\inf \{\lambda>0: \tau(e_\lambda(|x|)< t) \},~t> 0.\]
Except for continuity, $\mu^l(x)$ and $\mu(x)$ have many similar properties.
 See \cite{O1970} and \cite{O19701} for basic properties and detailed information
of this  nonincreasing left-continuous function.

For $x\in \mathcal{M}$, we define
\begin{equation}\label{033}
\Lambda_t(x)=\exp{(\int_0^t\log \mu_s(x)ds)},~t>0.
\end{equation}
From the definition of $\Lambda_t(x)$ and the properties of
$\mu_t(x)$, we obtain
\begin{equation*}\label{03}
 \Lambda_t(x)=\Lambda_t(x^*)=\Lambda_t(|x|),~t>0
\end{equation*}
and
\begin{equation}\label{04}
 \Lambda_t(x^\alpha)=\Lambda_t(x )^\alpha,~t>0,~\mbox{if}~\alpha>0~\mbox{and}~x>0.
\end{equation}
Moreover, it follows from \cite[Theorem 4.2]{FK1986} that
\begin{equation}\label{05}
\Lambda_t(xy)\leq\Lambda_t(x)\Lambda_t(y),~t>0
\end{equation}
holds for all $x,~y\in\mathcal{M}$.
\begin{definition}
Let  $x,~y\in  \mathcal{M}$. We say that
$x$ is logarithmically submajorized by $y$ and write $x\prec_{wlog} y$
if and only if
$$
\Lambda_t(x) \leq \Lambda_t(y)~\mbox{for all}~t\geq0.
$$
\end{definition}

\begin{definition} Let $\mathcal{M}$  be a finite von
Neumann algebra acting on a separable Hilbert space $\mathcal{H}$
with a normal faithful finite trace $\tau$.
For $x\in\mathcal{M}$, the Fuglede-Kadison determinant of $x$
is defined by $\Delta(x) = \exp\tau(\log |x|)$ if $|x|$ is invertible;
and otherwise, the Fuglede-Kadison determinant $\Delta(x)=\inf\Delta(|x| + \varepsilon \un)$,
the infimum takes over all scalars $\varepsilon>0$.
\end{definition}

Furthermore, let $f:\mathbb{R}^+\rightarrow\mathbb{R}^+$
be a continuous increasing
function with $f(0)=1$. If $x\in \mathcal{M}_+$,
then $t\rightarrow\log f(t)$ is a continuous increasing function with $\log f(0)=0$.
From \cite[Lemma 2.5]{FK1986} and the definition of determinant, we have
\begin{equation*}\label{pre 1.03}
\Delta(f(x))=\exp\int_0^{\tau(\un)}\log
(f(\mu_t(x)))dt=\Lambda_{\tau(\un)}(f(x)).
\end{equation*}

 See \cite{A1967, B1986, DDSZ2020} for basic properties and detailed information
on Fuglede-Kadison determinant and logarithmic  submajorisations of $x\in\mathcal{M}$.

Let $\mathbb{M}_2(\mathcal{M})$ denote the linear space of
$2\times 2$ matrices
$$
x=\left(\begin{array}{cccc}x_{11}&x_{12}\\
x_{21}&x_{22}
 \end{array}\right)
 $$
with entries $x_{ij}\in\mathcal{M},~ i,~j=1,~2$.
Let $\mathcal{H}^2=\mathcal{H}\oplus\mathcal{H}$, then $\mathbb{M}_2(\mathcal{M})$
is a von Neumann algebra
on the Hilbert space $\mathcal{H}^2.$  For $x\in \mathbb{M}_2(\mathcal{M})$, we define
$\tau_2(x)=\sum_{i=1}^2\tau(x_{ii})$. Then
$\tau_2$ is a normal faithful finite trace on $\mathbb{M}_2(\mathcal{M})$.
The direct sum of operators $x_1,~x_2\in \mathcal{M}$, denoted by
 $x_1\oplus x_2$, is the block-diagonal operator  matrix defined on $\mathcal{H}^2$ by
$$x_1\oplus x_2=\left(\begin{array}{cccc}x_{1}&0\\
0&x_{2}
 \end{array}\right).$$

\section{Some logarithmic submajorisations inequalities}

To achieve our main results, we state for easy reference the following fact, obtained
from  \cite{DD1992, MKX2019}, which will be applied below.

\begin{lemma}\label{2.4DD1992}
Let $x,~y\in \mathcal{M}$.
\begin{enumerate}
\item If $0<x\in \mathcal{M}$, then
$$\mu_t(\un+x)=1+\mu_t(x).$$
\item Let $a,~b \in \mathcal{M}$ be two positive operators. Then the matrix
$
\left(
   \begin{array}{cc}
     a & x \\
     x^* & b \\
   \end{array}
 \right)\in \mathbb{M}_2(\mathcal{M})
$
is a positive semidefinite operator if and only if
 $x=a^{\frac{1}{2}}wb^{\frac{1}{2}}$ for some
contraction $w$.
\end{enumerate}
\end{lemma}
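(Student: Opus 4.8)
The plan is to settle part (1) immediately from the functional-calculus identity (\ref{pre 1.2}), and to treat part (2) as the classical Smul'jan factorisation of a positive $2\times 2$ operator matrix, the only real work being to place the contraction $w$ inside $\mathcal{M}$. For part (1), note that $f(s)=1+s$ is continuous and increasing on $[0,\infty)$ with $f(0)=1\geq 0$, and that $\un+x=f(x)$ because $x>0$ gives $|x|=x$. Thus (\ref{pre 1.2}) applies directly and yields $\mu_t(\un+x)=\mu_t(f(x))=f(\mu_t(x))=1+\mu_t(x)$ for every $t>0$; there is no difficulty here.

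For the sufficiency in part (2), suppose $x=a^{1/2}wb^{1/2}$ with $w\in\mathcal{M}$ and $\|w\|\leq 1$. I would write the factorisation
$$\left(\begin{array}{cc} a & x \\ x^* & b\end{array}\right)=\left(\begin{array}{cc} a^{1/2} & 0 \\ 0 & b^{1/2}\end{array}\right)\left(\begin{array}{cc} \un & w \\ w^* & \un\end{array}\right)\left(\begin{array}{cc} a^{1/2} & 0 \\ 0 & b^{1/2}\end{array}\right),$$
which one checks by direct multiplication using $x=a^{1/2}wb^{1/2}$ and $x^*=b^{1/2}w^*a^{1/2}$. The middle factor is positive, since for $\xi\oplus\eta\in\mathcal{H}^2$ one has $\|\xi\|^2+\|\eta\|^2+2\,\Re\langle w\eta,\xi\rangle\geq(\|\xi\|-\|\eta\|)^2\geq 0$ by $\|w\|\leq 1$; and the outer factor is self-adjoint, so conjugation preserves positivity and the left-hand side is positive semidefinite.

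For the necessity, assume $P:=\left(\begin{array}{cc} a & x \\ x^* & b\end{array}\right)\geq 0$. Expanding $\langle P(\xi\oplus\eta),\xi\oplus\eta\rangle\geq 0$ and optimising over the phase and scale of $\xi,\eta$ gives the Cauchy--Schwarz inequality $|\langle x\eta,\xi\rangle|\leq\|a^{1/2}\xi\|\,\|b^{1/2}\eta\|$, which forces $\overline{\mathrm{ran}\,x}\subseteq\overline{\mathrm{ran}\,a}$ and $\overline{\mathrm{ran}\,x^*}\subseteq\overline{\mathrm{ran}\,b}$, i.e. $p_ax=x$ and $xp_b=x$ for the support projections $p_a,p_b\in\mathcal{M}$ of $a,b$. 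To build the contraction inside $\mathcal{M}$ I would regularise: set $a_\varepsilon=a+\varepsilon\un$ and $b_\varepsilon=b+\varepsilon\un$, invertible in $\mathcal{M}$, and $w_\varepsilon=a_\varepsilon^{-1/2}\,x\,b_\varepsilon^{-1/2}\in\mathcal{M}$. Since $P+\varepsilon\un\geq 0$ has invertible diagonal, the Schur-complement criterion gives $x^*a_\varepsilon^{-1}x\leq b_\varepsilon$, so $w_\varepsilon^*w_\varepsilon=b_\varepsilon^{-1/2}(x^*a_\varepsilon^{-1}x)b_\varepsilon^{-1/2}\leq\un$ and each $w_\varepsilon$ is a contraction. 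A weak$^*$ cluster point $w$ of $(w_\varepsilon)$ as $\varepsilon\to 0$ lies in $\mathcal{M}$ and satisfies $\|w\|\leq 1$; passing to the limit in $a^{1/2}w_\varepsilon b^{1/2}=\bigl(a^{1/2}a_\varepsilon^{-1/2}\bigr)x\bigl(b_\varepsilon^{-1/2}b^{1/2}\bigr)$, and using that $a^{1/2}a_\varepsilon^{-1/2}\to p_a$ and $b_\varepsilon^{-1/2}b^{1/2}\to p_b$ strongly together with $p_ax=x=xp_b$, identifies the limit as $x$, so $a^{1/2}wb^{1/2}=x$.

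The main obstacle is exactly this last step. Over a general finite von Neumann algebra $a$ and $b$ may fail to be invertible, so the naive assignment $w=a^{-1/2}xb^{-1/2}$ is meaningless; the $\varepsilon$-regularisation repairs this, but one must then argue both that the resulting contractions possess a limit representing $x$ and that this limit belongs to $\mathcal{M}$ rather than merely to $\mathcal{B}(\mathcal{H})$. Weak$^*$ compactness of the unit ball of $\mathcal{M}$ supplies the cluster point, while the strong-convergence bookkeeping for $a^{1/2}a_\varepsilon^{-1/2}$ and $b_\varepsilon^{-1/2}b^{1/2}$ on the closed ranges of $a$ and $b$ is what pins down the limit; both are routine once set up, but they are the substance of the argument in the operator setting.
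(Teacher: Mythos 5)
Your proof is correct. Note, however, that the paper does not prove this lemma at all: it is stated as a known fact imported from \cite{DD1992} and \cite{MKX2019}, so there is no in-paper argument to compare against. Your part (1) is exactly the one-line application of (\ref{pre 1.2}) with $f(s)=1+s$ that the cited sources rely on (with the tacit restriction $0<t<\tau(\un)$, since for $t\geq\tau(\un)$ both generalized singular numbers vanish and the identity degenerates). Your part (2) is the standard Smul'jan-type factorisation: the sufficiency via conjugating $\left(\begin{smallmatrix}\un & w\\ w^* & \un\end{smallmatrix}\right)$ by $\mathrm{diag}(a^{1/2},b^{1/2})$, and the necessity via the Schur complement for the regularised matrix $\left(\begin{smallmatrix}a_\varepsilon & x\\ x^* & b_\varepsilon\end{smallmatrix}\right)$, a weak$^*$ cluster point of the contractions $w_\varepsilon=a_\varepsilon^{-1/2}xb_\varepsilon^{-1/2}$, and the strong limits $a^{1/2}a_\varepsilon^{-1/2}\to p_a$, $b_\varepsilon^{-1/2}b^{1/2}\to p_b$ combined with $p_ax=x=xp_b$. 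This is essentially the argument in \cite{MKX2019}, and your handling of the non-invertible case --- the only genuinely delicate point --- is done properly, including the observation that the cluster point stays in $\mathcal{M}$ by weak$^*$ compactness of its unit ball.
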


\begin{lemma}\label{lemma 6}
Let $x,~y\in \mathcal{M}$ be two positive operators. Then
\begin{eqnarray*}\label{Inequality 1}
\Lambda_t(y^px^py^p)\leq \Lambda_t((yxy)^p),~0\leq p\leq 1
\end{eqnarray*}
and
\begin{eqnarray*}\label{Inequality 2}
\Lambda_t(y^px^py^p)\geq \Lambda_t((yxy)^p),~ p\geq 1.
\end{eqnarray*}
\end{lemma}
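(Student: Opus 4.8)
The plan is to reduce both inequalities to a single two-factor logarithmic submajorisation of Cordes--Araki type and then to attack that estimate by operator monotonicity. First I would show the two halves are equivalent, so that only the range $0\le p\le1$ needs treatment. Assume $\Lambda_t(b^qa^qb^q)\le\Lambda_t(bab)^q$ for all positive $a,b$ and $0\le q\le1$. Given $p\ge1$, apply this with $q=1/p$, $a=x^p$, $b=y^p$; since $(y^p)^{1/p}=y$ and $(x^p)^{1/p}=x$, it yields $\Lambda_t(yxy)\le\Lambda_t(y^px^py^p)^{1/p}$, and raising to the $p$-th power together with the power rule $\Lambda_t(x^\alpha)=\Lambda_t(x)^\alpha$ of \eqref{04} gives $\Lambda_t((yxy)^p)\le\Lambda_t(y^px^py^p)$. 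The degenerate cases $p=0$ and $t=\tau(\un)$ are equalities, the latter because $\Lambda_{\tau(\un)}=\Delta$ is multiplicative.

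Next I would strip $\Lambda_t(y^px^py^p)\le\Lambda_t(yxy)^p$ down to two factors. Using $\Lambda_t(z)=\Lambda_t(|z|)$ and the identity $\Lambda_t(z^*z)=\Lambda_t(|z|^2)=\Lambda_t(|z|)^2=\Lambda_t(z)^2$ (the middle step by \eqref{04}), and writing $y^px^py^p=(x^{p/2}y^p)^*(x^{p/2}y^p)$ and $yxy=(x^{1/2}y)^*(x^{1/2}y)$, the inequality becomes $\Lambda_t(x^{p/2}y^p)^2\le\Lambda_t(x^{1/2}y)^{2p}$; that is, with $C=x^{1/2}\ge0$ and $D=y\ge0$,
\[\Lambda_t(C^pD^p)\le\Lambda_t(CD)^p,\qquad 0\le p\le1.\]
This two-factor submajorisation carries the whole content of the lemma. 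I would emphasise that it does \emph{not} follow from the submultiplicativity \eqref{05}, which only delivers the strictly weaker bound $\Lambda_t(C)^p\Lambda_t(D)^p$.

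For the two-factor inequality I would first dispose of the operator-norm (i.e.\ $t\to0$) case, which is clean. Assuming $D$ invertible (otherwise replace $D$ by $D+\varepsilon\un$ and let $\varepsilon\downarrow0$, using right-continuity of $\mu$), from $\|CD\|^2=\|DC^2D\|$ we get $DC^2D\le\|CD\|^2\un$, hence $C^2\le\|CD\|^2D^{-2}$; applying the L\"owner--Heinz inequality (the map $u\mapsto u^p$ is operator monotone for $0\le p\le1$) gives $C^{2p}\le\|CD\|^{2p}D^{-2p}$, so that $D^pC^{2p}D^p\le\|CD\|^{2p}\un$ and therefore $\|C^pD^p\|\le\|CD\|^p$.

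The main obstacle will be upgrading this single-norm estimate to the whole family $\{\Lambda_t\}_{t>0}$. In the matrix case one feeds the previous step into the antisymmetric tensor powers $\wedge^k$ (using $\wedge^k(A^p)=(\wedge^kA)^p$ for $A\ge0$), turning $\|\cdot\|$ into $\prod_{j\le k}s_j(\cdot)$ and hence into $\Lambda$; in a finite von Neumann algebra no such exterior powers exist, and this is exactly where genuine work is required. I would obtain the $\Lambda_t$-version from the Araki--Lieb--Thirring inequality for generalised singular numbers in the semifinite setting (Kosaki), established by complex interpolation of the analytic family $z\mapsto C^zD^z$ on the strip $0\le\Re z\le1$: on $\Re z=0$ the operator is a product of unitaries, so every $\Lambda_t$ equals $1$; at $z=1$ it is $CD$; and $z\mapsto\log\Lambda_t(C^zD^z)$ is subharmonic, the delicate point being the control of the boundary behaviour on the line $\Re z=1$. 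Integrating the resulting pointwise log-majorisation of $\mu_s$ over $[0,t]$ produces the displayed inequality, and tracing the two reductions back then completes the proof for all $p$.
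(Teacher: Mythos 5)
Your proposal is correct and ultimately rests on the same key ingredient as the paper: the Araki--Lieb--Thirring logarithmic submajorisation in the semifinite setting (Kosaki/Han), which the paper simply cites as \cite[Proposition 1.11]{B1986} and \cite[Lemma 2.5]{Han2016} without further argument. Your reductions (the $q=1/p$ substitution exchanging the two ranges, and the passage to the two-factor form $\Lambda_t(C^pD^p)\leq\Lambda_t(CD)^p$ via $\Lambda_t(z^*z)=\Lambda_t(z)^2$) are sound and in effect reconstruct what those citations deliver.
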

\begin{proof}
It follows from \cite[Proposition 1.11]{B1986} and \cite[Lemma 2.5]{Han2016}.
\end{proof}

In the following theorem, we extend inequality (\ref{ineq 1.003}) to the case of operators of finite von Neumann algebra.
\begin{theorem}
Let $x,~y \in \mathcal{M}$ . Then
\[\Lambda_t(|x+y|^r)\leq \Lambda_t(\un +|x|^r)\Lambda_t(\un +|y|^r) \]
holds for $t\geq 0$ and $1 \leq r \leq 2$. Moreover, we have
\[\Delta(|x+y|^r)\leq \Delta(\un +|x|^r)\Delta(\un +|y|^r) \]
holds for  $1 \leq r \leq 2$.
\end{theorem}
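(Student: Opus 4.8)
The plan is to prove the case $r=2$ first by a lossless factorisation of the sum, then to descend to the range $1\le r\le2$ by a scalar subadditivity estimate, and finally to read off the Fuglede--Kadison inequality as the endpoint $t=\tau(\un)$. Throughout, since $\tau(\un)=1$ and every operator involved is bounded, only $0<t\le\tau(\un)$ carries content (for $t>\tau(\un)$ both sides vanish), which is what lets the $\mathbb{M}_2(\mathcal M)$-computations below reduce cleanly to $\mathcal M$.

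First I would record the identity $x+y=\begin{pmatrix}x&\un\end{pmatrix}\begin{pmatrix}\un\\ y\end{pmatrix}$, realised inside $\mathbb{M}_2(\mathcal M)$ by $\tilde R=\left(\begin{smallmatrix}x&\un\\0&0\end{smallmatrix}\right)$ and $\tilde C=\left(\begin{smallmatrix}\un&0\\ y&0\end{smallmatrix}\right)$, so that $\tilde R\tilde C=(x+y)\oplus0$. The point of this particular factorisation is that $\tilde R\tilde R^{*}=(\un+xx^{*})\oplus0$ and $\tilde C^{*}\tilde C=(\un+|y|^{2})\oplus0$, i.e. the constant $\un$ is built into the two factors, so that submultiplicativity \eqref{05} (applied in the finite algebra $\mathbb{M}_2(\mathcal M)$) costs nothing. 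Using $\Lambda_t(z)=\Lambda_t(z^{*})=\Lambda_t(|z|)$ together with the power law \eqref{04}, this yields $\Lambda_t(x+y)\le\Lambda_t(\un+xx^{*})^{1/2}\Lambda_t(\un+|y|^{2})^{1/2}$; since $\mu_s(xx^{*})=\mu_s(x^{*}x)$ and $\mu_s(\un+w)=1+\mu_s(w)$ by Lemma \ref{2.4DD1992}(1), the first factor equals $\Lambda_t(\un+|x|^{2})^{1/2}$. Squaring gives the $r=2$ inequality $\Lambda_t(|x+y|^{2})\le\Lambda_t(\un+|x|^{2})\Lambda_t(\un+|y|^{2})$.

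Second I would descend to $1\le r\le2$. Writing $|x+y|^{r}=(|x+y|^{2})^{r/2}$ and applying \eqref{04} with exponent $r/2\le1$, the $r=2$ case gives $\Lambda_t(|x+y|^{r})\le\Lambda_t((\un+|x|^{2})^{r/2})\Lambda_t((\un+|y|^{2})^{r/2})$. It then remains to dominate each factor by $\Lambda_t(\un+|x|^{r})$, respectively $\Lambda_t(\un+|y|^{r})$. Since $\mu_s((\un+|x|^{2})^{r/2})=(1+\mu_s(x)^{2})^{r/2}$ while $\mu_s(\un+|x|^{r})=1+\mu_s(x)^{r}$, this reduces to the scalar inequality $(1+a^{2})^{r/2}\le1+a^{r}$ for $a\ge0$, which is exactly the subadditivity of $t\mapsto t^{r/2}$ on $[0,\infty)$ (valid because $r/2\in[\tfrac12,1]$). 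Integrating logarithms of generalised singular numbers then gives the two factor estimates and hence the full submajorisation. For the determinant statement I specialise to $t=\tau(\un)$: the preliminaries identify $\Delta(f(w))=\Lambda_{\tau(\un)}(f(w))$, so $\Delta(|x+y|^{r})=\Lambda_{\tau(\un)}(|x+y|^{r})$ and $\Delta(\un+|x|^{r})=\Lambda_{\tau(\un)}(\un+|x|^{r})$, whence the Fuglede--Kadison inequality is precisely the $t=\tau(\un)$ instance of what was just proved.

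The step I expect to be most delicate is the bookkeeping in the second paragraph: one must verify that for $0<t\le\tau(\un)$ the $\mathbb{M}_2(\mathcal M)$-functional of a cornered operator $a\oplus0$ genuinely reduces to $\Lambda_t(a)$ computed in $\mathcal M$ (the adjoined zero block contributes only the smallest, hence irrelevant, generalised singular numbers, so the top-$t$ logarithmic integral is unaffected), and that the rectangular factorisation $x+y=\tilde R\tilde C$ is legitimately covered by \eqref{05} after this embedding. Once these two points are pinned down, everything else is either a pointwise estimate on $\mu_s$ or an application of the power law \eqref{04}.
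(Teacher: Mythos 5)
Your argument is correct, and it reaches the theorem by a genuinely different (and somewhat leaner) route than the paper's. The paper factors the positive $2\times2$ operator matrix with diagonal entries $\un+xx^{*}$, $\un+y^{*}y$ and off-diagonal entry $x+y$, invokes Lemma \ref{2.4DD1992}(2) to write $x+y=(\un+xx^{*})^{1/2}w(\un+|y|^{2})^{1/2}$ for a contraction $w$, and then needs the Araki--Lieb--Thirring-type Lemma \ref{lemma 6} (with $p=r\ge1$), submultiplicativity \eqref{05}, and $\mu_t(w^{*}aw)\le\mu_t(a)$ to arrive at $\Lambda_t(|x+y|^{2r})\le\Lambda_t(\un+|x|^{2})^{r}\Lambda_t(\un+|y|^{2})^{r}$. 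Your rectangular factorisation $(x+y)\oplus0=\tilde R\tilde C$ produces exactly the same intermediate bound as the $r$-th power (via \eqref{04}) of its $r=1$ instance, using only \eqref{05} in $\mathbb{M}_2(\mathcal M)$ together with $\Lambda_t(z)=\Lambda_t(z^{*})$ and $\mu_s(zz^{*})=\mu_s(z^{*}z)$; in particular you bypass both the block-positivity lemma and Lemma \ref{lemma 6}, and your proof makes it visible that the paper's inequality \eqref{det-n} is simply the $r$-th power of its $r=1$ case. From there the two proofs coincide: the scalar subadditivity $(1+a^{2})^{r/2}\le1+a^{r}$ for $r/2\le1$, and the specialisation $t=\tau(\un)$ for the Fuglede--Kadison statement. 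The two points you flag as delicate are genuine but unproblematic: $\mu_s^{\tau_2}(a\oplus0)=\mu_s^{\tau}(a)$ for all $s>0$ because $e_{(\lambda,\infty)}(|a\oplus0|)=e_{(\lambda,\infty)}(|a|)\oplus0$ for every $\lambda>0$, and \eqref{05} applies in $\mathbb{M}_2(\mathcal M)$ since it is again a finite von Neumann algebra with normal faithful finite trace $\tau_2$ (no renormalisation of $\tau_2$ is needed for the cited submultiplicativity).
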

\begin{proof} Since
\[\left(
    \begin{array}{cc}
      \un+xx^* & x+y \\
      (x+y)^* & \un+y^*y \\
    \end{array}
  \right)
  =\left(
    \begin{array}{cc}
      \un & x \\
      y^* & \un \\
    \end{array}
  \right)
  \left(
    \begin{array}{cc}
      \un & y \\
      x^* & \un \\
    \end{array}
  \right)\geq 0,
\]
by Lemma \ref{2.4DD1992}~(2), there exists a contraction $w$ with
\begin{eqnarray*}
x+y=(\un+xx^*)^{\frac{1}{2}}w(\un+y^*y)^{\frac{1}{2}}
=(\un+|x^*|^2)^{\frac{1}{2}}w(\un+|y|^2)^{\frac{1}{2}}.
\end{eqnarray*}
Thus
\[|x+y|^{2r}=[(\un+|y|^2)^{\frac{1}{2}}w^*(\un+|x^*|^2)w(\un+|y|^2)^{\frac{1}{2}}]^r,~1 \leq r \leq 2.\]
Together the above equality with Lemma \ref{lemma 6}
and (\ref{05}), we obtain
\begin{equation} \label{inequlity-2r}
\begin{array}{lll}
\Lambda_t(|x+y|^{2r})
&\leq & \Lambda_t((\un+|y|^2)^{\frac{r}{2}}(w^*(\un+|x^*|^2) w)^r(\un+|y|^2)^{\frac{r}{2}})\\
&\leq & \Lambda_t((\un+|y|^2)^{\frac{r}{2}})
\Lambda_t((w^*(\un+|x^*|^2) w)^r)
\Lambda_t((\un+|y|^2)^{\frac{r}{2}}))\\
&= &\Lambda_t((\un+|y|^2)^r)\Lambda_t((w^*(\un+|x^*|^2) w)^r),
\end{array}
\end{equation}
for $ t\geq 0$.
Since $w$ is a contraction, then
\[\mu_t(w^*(\un+|x^*|^2) w)\leq \mu_t(\un+|x^*|^2),~t>0.\]
Inequalities above and (\ref{inequlity-2r}) imply
\begin{equation}\label{det-n}
\begin{array}{lll}
\Lambda_t(|x+y|^{2r})
&\leq &\Lambda_t(\un+|y|^2)^r \Lambda_t(\un+|x^*|^2)^r\\
&=&\Lambda_t(\un+|y|^2)^r \Lambda_t(\un+|x|^2)^r,~t>0,
\end{array}
\end{equation}
the above equality holds due to the unitarily equivalent of $|x|^2$ and $|x^*|^2$.
When $r = 1$, by inequality above, we have
\begin{equation}\label{det-2}
\begin{array}{lll}
\Lambda_t(|x+y|^{2})
&\leq &\Lambda_t(\un+|y|^2) \Lambda_t(\un+|x|^2),~t\geq 0.
\end{array}
\end{equation}

On the other hand, $f(t) = t^{\frac{r}{2}} (1 \leq r < 2)$ is a concave function on $[0,+\infty)$, then
$f(s)+ f(t) \leq  f(s+ t)$, for $s,~t \in [0,+\infty)$. It follows that
\begin{equation}\label{mu-r}
(1+\mu_t(|x|)^2)^{\frac{r}{2}}\leq 1+ \mu_t(|x|)^r,~x\in \mathcal{M}~\text{and}~t\geq 0.
\end{equation}
Combining (\ref{pre 1.2}), (\ref{033}), (\ref{04}), (\ref{det-n})
 with (\ref{mu-r}) and noting that
$\mu_t(\un + |x|^2) = 1 + \mu_t(|x|)^2$
$(t\geq 0)$, for $x \in \mathcal{M}$, we get
\[\Lambda_t(|x+y|^{2r})\leq \Lambda_t(\un+|y|^r)^2 \Lambda_t(\un+|x|^r)^2\]
which means that
\begin{equation}\label{det-r}\Lambda_t(|x+y|^{r})\leq \Lambda_t(\un+|y|^r)\Lambda_t(\un+|x|^r),~t\geq 0~\text{and}~1 \leq r < 2.\end{equation}

Thus, the result follows from inequalities (\ref{det-2}) and (\ref{det-r}).
This completes the proof.
\end{proof}

\section{A generalized H\"{o}lder type eigenvalue inequality}

To prove the finial theorem, we start with the following equations of the two kinds of sigular numbers.
\begin{lemma}\label{lemma 10}
 Suppose $x \in \mathcal{M}$ is a contractive operator.
 Then
\[\mu_s(\un-|x|)=1-\mu_{1-s}^l(|x|),~0< s< 1\]
and \[\mu_s^l(\un-|x|)=1-\mu_{1-s}(|x|),~0< s< 1.\]
\end{lemma}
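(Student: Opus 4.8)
The plan is to reduce both identities to elementary statements about the distribution function of $|x|$, exploiting that $x$ being a contraction forces $0 \le |x| \le \un$, so that $\un - |x|$ is itself a positive contraction. Write $a = |x|$ and $b = \un - a$, and set $d_a(\lambda) = \tau(e_{(\lambda,\infty)}(a))$, the right-continuous, non-increasing distribution function, together with its left-continuous companion $d_a^-(\lambda) = \tau(e_{[\lambda,\infty)}(a)) = \lim_{\lambda' \uparrow \lambda} d_a(\lambda')$. Unwinding the definitions, $\mu_s(a) = \inf\{\lambda > 0 : d_a(\lambda) \le s\}$ and $\mu_s^l(a) = \inf\{\lambda > 0 : d_a(\lambda) < s\}$; the only difference between the two is whether the level set is cut with a strict or a non-strict inequality, which is precisely the source of the left/right continuity dichotomy.

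First I would establish the spectral bridge between $a$ and $b$. Since $\un - a > \mu$ is equivalent to $a < 1 - \mu$ in the Borel functional calculus, we have $e_{(\mu,\infty)}(b) = e_{[0,1-\mu)}(a) = \un - e_{[1-\mu,\infty)}(a)$, whence, using $\tau(\un) = 1$, $d_b(\mu) = 1 - d_a^-(1-\mu)$ for $0 < \mu < 1$. Substituting this into the defining infimum for $\mu_s(b)$ and changing variables via $\lambda = 1 - \mu$ turns the first identity into the purely scalar claim $\sup\{\lambda : d_a^-(\lambda) \ge 1-s\} = \inf\{\lambda : d_a(\lambda) < 1 - s\}$, whose right-hand side is exactly $\mu_{1-s}^l(a)$. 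The same manipulation applied to $\mu_s^l(b)$ reduces the second identity to $\sup\{\lambda : d_a^-(\lambda) > 1-s\} = \inf\{\lambda : d_a(\lambda) \le 1-s\} = \mu_{1-s}(a)$.

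It then remains to prove the two scalar reflection lemmas: for a non-increasing right-continuous $d$ with left-continuous version $d^-$ and any level $c$, one has $\sup\{\lambda : d^-(\lambda) \ge c\} = \inf\{\lambda : d(\lambda) < c\}$ and $\sup\{\lambda : d^-(\lambda) > c\} = \inf\{\lambda : d(\lambda) \le c\}$. The inequality $d^-(\lambda) \ge d(\lambda)$, immediate from monotonicity since $d(\lambda') \ge d(\lambda)$ for $\lambda' < \lambda$, gives the ``$\ge$'' direction of each: for $\lambda$ below the right-hand infimum one has $d(\lambda) \ge c$ (resp. $d(\lambda) > c$), hence $d^-(\lambda) \ge d(\lambda)$ lands in the relevant left-hand level set. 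For the ``$\le$'' direction, for $\lambda$ above the infimum one picks $\lambda''$ between the infimum and $\lambda$ with $d(\lambda'') < c$ (resp. $d(\lambda'') \le c$), which is possible because the sublevel sets of the non-increasing $d$ are up-sets, and uses $d^-(\lambda) \le d(\lambda'')$ to push $\lambda$ out of the left-hand level set.

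The main obstacle is entirely in this last step: keeping the strict versus non-strict inequalities aligned with left versus right continuity. The first lemma pairs the non-strict level $\{d^- \ge c\}$ with the strict level $\{d < c\}$, while the second pairs $\{d^- > c\}$ with $\{d \le c\}$; getting these pairings right is what makes the two identities genuinely distinct rather than one being a cosmetic variant of the other. Everything else---the functional-calculus identity for $\un - a$, the change of variables $\lambda \mapsto 1 - \lambda$, and the reduction to one real variable---is routine, and the normalization $\tau(\un) = 1$ is what produces the clean constant $1$ on the right-hand sides. I would finally note that the hypotheses $0 < s < 1$ guarantee the relevant infima are attained in $(0,1)$, so no boundary degeneracies arise.
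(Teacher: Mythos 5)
Your argument is correct, and it takes a genuinely different route from the paper. The paper proves the lemma by restricting to the set $K$ of finite linear combinations $\sum_k c_k e_k$ of mutually orthogonal projections, computing $\mu_s$ and $\mu_s^l$ explicitly as step functions, verifying the identity for the rearrangement of $t\mapsto 1-\mu_t(x)$ by hand, and appealing to norm-density of $K$ to pass to general $x$. (Strictly speaking that density claim is only valid for normal -- here positive -- operators, since $K$ consists of normal elements; the argument is fixable because only $|x|$ enters, but it is a soft spot, and one must also invoke norm-continuity of $s\mapsto\mu_s$ to transfer the identity through the limit.) Your proof instead works directly with the distribution function $d_a(\lambda)=\tau(e_{(\lambda,\infty)}(a))$ and its left-continuous version, uses the functional-calculus identity $e_{(\mu,\infty)}(\un-a)=\un-e_{[1-\mu,\infty)}(a)$ to get $d_{\un-a}(\mu)=1-d_a^-(1-\mu)$, and reduces everything to the two scalar reflection identities $\sup\{d^-\ge c\}=\inf\{d<c\}$ and $\sup\{d^->c\}=\inf\{d\le c\}$ for a non-increasing right-continuous $d$; your verification of these, via $d^-\ge d$ in one direction and the up-set property of the sublevel sets in the other, is sound, and normality of the finite trace justifies $d_a^-(\lambda)=\lim_{\lambda'\uparrow\lambda}d_a(\lambda')$. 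What your approach buys is that it handles an arbitrary positive contraction in one stroke, with no approximation or continuity argument and with the strict/non-strict pairings made fully explicit; what the paper's approach buys is a concrete picture of both rearrangements as explicit step functions, at the cost of the density step. Either way the conclusion of the lemma stands.
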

\begin{proof} Let
$$
K=\left\{x:\begin{array}{l}
             x=\sum_{k=1}^nc_ke_k, c_k\in\mathbb{C},~\text{for any positive integer}~ n \\
e_k\in P(\mathcal{M}), e_k\bot e_j,  if ~k\neq j,
\tau(e_k)<\infty ,~j, k=1, 2, \ldots, n
 \end{array}
\right\}.
$$
Since $K$ is dense in $\mathcal{M}$ with the operator norm,
it is sufficient to show the lemma holds for
  $x=\sum_{k=1}^Nc_ke_k\in K$. It is clear that $|x|=\sum_{k=1}^N|c_k|e_k$.
Without loss of generality, we suppose $1\geq |c_1|>|c_2|>\cdots>|c_N|$.
 Let $d_j=\sum_{k=1}^j\tau(e_k), 1\leq j\leq N$, $d_0=0$ and $\chi$ denote the indicative function.  Then
$$
\mu_s(x)=\mu_s(|x|)=|c_1|\chi_{(d_0, d_1)}(s)+\sum_{j=2}^N|c_j|\chi_{[d_{j-1}, d_j)}(s), s\in (0, d_N)
$$ and $\mu_s(x)=\mu_s(|x|)=0, s\in[d_N, 1)$.
Similarly, $$
\mu_s^l(x)=\mu_s^l(|x|)= \sum_{j=1}^N|c_j|\chi_{(d_{j-1}, d_j]}(s), s\in (0, d_N]
$$ and $\mu_s^l(x)=\mu_s^l(|x|)=0, s\in(d_N, 1)$.
Let $f(t)=1-\mu_t(x)$. Then
\begin{eqnarray*}
\mu_t(f)&=&\chi_{(0, 1-d_n)}+\Sigma_{i=1}^N(1-|c_i|)\chi_{[1-d_i, 1-d_{i-1})}\\
&=& 1-\{0\chi_{(0, 1-d_n)}+\Sigma_{i=1}^N |c_i| \chi_{[1-d_i, 1-d_{i-1})}\}\\
&=&1-\mu_{1-t}^l(x),
\end{eqnarray*}
which implies that the first equation holds. The proof of the other containment is similar.
\end{proof}

\begin{lemma}\label{lemma 12}
Let $x\in \mathcal{M}$ be a self-adjoint contractive operator, then
\[\mu_t(\un-|x|)\leq \mu_t(\un-x).\]
\end{lemma}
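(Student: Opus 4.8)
The plan is to reduce the claim to the operator inequality $\un-|x|\le \un-x$ between two \emph{positive} elements, and then to invoke the monotonicity of the generalized singular numbers on the positive cone.

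First I would record the positivity facts that follow from $x$ being self-adjoint and contractive. Since $x=x^*$ and $\|x\|\le 1$, the spectrum of $x$ lies in $[-1,1]$, so that $-\un\le x\le \un$ and $0\le |x|\le \un$. In particular both $\un-x$ and $\un-|x|$ are positive operators in $\mathcal{M}$, with $0\le \un-|x|$ and $0\le \un-x\le 2\un$. Next comes the key algebraic observation: for a self-adjoint element one always has $x\le |x|$. Writing $x=x_+-x_-$ and $|x|=x_++x_-$ in terms of the positive and negative parts (equivalently, using the spectral inequality $\lambda\le|\lambda|$), one obtains $|x|-x=2x_-\ge 0$, hence
\[
(\un-x)-(\un-|x|)=|x|-x\ge 0 ,
\]
so that $0\le \un-|x|\le \un-x$.

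Finally I would apply the monotonicity of $\mu$: if $0\le a\le b$ in $\mathcal{M}$, then $\mu_t(a)\le\mu_t(b)$ for every $t>0$. Taking $a=\un-|x|$ and $b=\un-x$ yields precisely $\mu_t(\un-|x|)\le\mu_t(\un-x)$, which is the assertion.

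The only step needing care is the monotonicity of $\mu_t$ on positive operators, which is standard (\cite{FK1986}); this is the main (and essentially the only) technical point, the rest being immediate from $\lambda\le|\lambda|$. Should I wish to keep the argument self-contained, I would deduce it from the properties already collected in Section~2. For invertible $b$, set $c=b^{-1/2}ab^{-1/2}$, a positive contraction with $0\le c\le \un$; applying $\mu_t(yy^*)=\mu_t(y^*y)$ with $y=b^{1/2}c^{1/2}$ and then $\mu_t(uzv)\le \|u\|\,\mu_t(z)\,\|v\|$ from (\ref{equa 2.1}) gives
\[
\mu_t(a)=\mu_t\!\big(b^{1/2}cb^{1/2}\big)=\mu_t\!\big(c^{1/2}bc^{1/2}\big)\le \|c^{1/2}\|^2\,\mu_t(b)\le \mu_t(b),
\]
and the general case follows by replacing $b$ with $b+\varepsilon\un$ and letting $\varepsilon\downarrow 0$. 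This completes the plan.
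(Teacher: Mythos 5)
Your proof is correct and follows essentially the same route as the paper: both arguments reduce the claim to the operator inequality $0\le \un-|x|\le \un-x$ (you via $|x|-x=2x_-\ge 0$, the paper via the equivalent quadratic-form computation $\langle\xi,x\xi\rangle\le\langle\xi,|x|\xi\rangle$) and then invoke the monotonicity of $\mu_t$ on positive operators from \cite[Lemma 2.5]{FK1986}. Your optional self-contained derivation of that monotonicity is a correct bonus but not a different method.
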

\begin{proof}
Let $\xi\in \mathcal{H}$, we have
\begin{eqnarray*}
\langle \xi, (\un-x) \xi\rangle=\langle\xi, \xi \rangle-\langle\xi, x\xi \rangle\geq \langle\xi, \xi \rangle-\langle \xi,|x|\xi \rangle=\langle \xi, (\un-|x|) \xi\rangle.
\end{eqnarray*}
By the monotonicity of $\mu_t$ proved by
 \cite[Lemma 2.5]{FK1986}, we immediately get the conclusion.
\end{proof}

\begin{lemma} \label{lemmar2rr}
Let $x,~y\in \mathcal{M}$ be two contractive operators and $r\geq1$. Then
\[\int_0^t\log(1-\mu_s(|xy|^r))ds\geq\int_0^t \log(1-\mu_s(|x|^r|y|^r))ds.\]
Moreover,
\[\int_{1-t}^1\log\mu_s^l(\un- |xy|^r)ds\geq\int_{1-t}^1 \log\mu_s^l(\un-||x|^r|y|^r|)ds.\]
\end{lemma}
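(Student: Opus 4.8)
The plan is to deduce the first inequality from a single logarithmic submajorisation, namely $|xy|^r\prec_{wlog}|x|^r|y|^r$, combined with a transfer principle that converts control of $\Lambda_t$ into control of the integrals $\int_0^t\log(1-\mu_s(\cdot))\,ds$. First I would record that, since $x,y$ are contractive, both $|xy|^r$ and $|x|^r|y|^r$ are contractive, so their generalised singular numbers lie in $[0,1]$ and every term $\log(1-\mu_s(\cdot))$ is well defined and non-positive. The function $\Psi(\lambda)=-\log(1-\lambda)$ is increasing on $[0,1)$ and, as one checks by differentiating twice, $x\mapsto\Psi(e^x)=-\log(1-e^x)$ is increasing and convex on $(-\infty,0)$. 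Hence, by the Hardy--Littlewood--P\'olya/Karamata comparison for decreasing functions, a logarithmic submajorisation $a\prec_{wlog}b$ — that is, $\int_0^t\log\mu_s(a)\,ds\le\int_0^t\log\mu_s(b)\,ds$ for all $t$ — upgrades to $\int_0^t\Psi(\mu_s(a))\,ds\le\int_0^t\Psi(\mu_s(b))\,ds$; rewriting $\Psi$, this is exactly $\int_0^t\log(1-\mu_s(a))\,ds\ge\int_0^t\log(1-\mu_s(b))\,ds$, which is the first assertion with $a=|xy|^r$ and $b=|x|^r|y|^r$.

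The core step is therefore the submajorisation $\Lambda_t(|xy|^r)\le\Lambda_t(|x|^r|y|^r)$ for $r\ge1$. For positive $x,y$ the argument is clean: from $|xy|^2=y\,x^2\,y$ and property (\ref{04}) one gets $\Lambda_t(|xy|^r)=\Lambda_t\big((yx^2y)^r\big)^{1/2}$, while from $|x^ry^r|^2=y^rx^{2r}y^r$ and $\Lambda_t(z)=\Lambda_t(|z|)$ one gets $\Lambda_t(|x|^r|y|^r)=\Lambda_t\big(y^rx^{2r}y^r\big)^{1/2}$. Applying Lemma~\ref{lemma 6} (the Araki-type inequality) with $p=r\ge1$ to the positive operators $y$ and $x^2$ yields $\Lambda_t\big((yx^2y)^r\big)\le\Lambda_t\big(y^r(x^2)^ry^r\big)=\Lambda_t\big(y^rx^{2r}y^r\big)$, and taking square roots gives precisely $\Lambda_t(|xy|^r)\le\Lambda_t(|x|^r|y|^r)$. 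For general contractions I would insert the polar decompositions $x=u|x|$, $y=v|y|$ and use $|xy|^2=|y|\,v^*|x|^2v\,|y|$, the singular-number domination $\mu_s(v^*|x|^2v)\le\mu_s(|x|^2)$ from (\ref{equa 2.1}), together with (\ref{04}) and (\ref{05}), to push the estimate through.

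For the ``moreover'' part I would simply transport the first inequality through Lemma~\ref{lemma 10}. Since $|xy|^r$ and $\big||x|^r|y|^r\big|$ are positive contractions, the second identity of Lemma~\ref{lemma 10} gives $\mu_s^l(\un-|xy|^r)=1-\mu_{1-s}(|xy|^r)$ and likewise for $\big||x|^r|y|^r\big|$. Substituting $u=1-s$ in the integrals over $[1-t,1]$ turns $\int_{1-t}^1\log\mu_s^l(\un-|xy|^r)\,ds$ into $\int_0^t\log(1-\mu_u(|xy|^r))\,du$, and the same for the other operator, so the second inequality is literally the first one after a change of variable and needs no new input.

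The step I expect to be the main obstacle is the passage from positive operators to general contractions in $|xy|^r\prec_{wlog}|x|^r|y|^r$: the partial isometry $v$ entering through $v^*|x|^2v$ spoils the direct use of Lemma~\ref{lemma 6}, because $(v^*|x|^2v)^r\neq v^*|x|^{2r}v$ in general and a bare singular-number domination need not survive being sandwiched between copies of $|y|^r$ inside $\Lambda_t$. Verifying that this non-self-adjoint interaction of the polar parts does not reverse the submajorisation is the delicate point, and it is the place I would examine most carefully — in particular to confirm whether the stated generality is available or whether an additional hypothesis (such as positivity of $x,y$) is required for the inequality to hold in the direction claimed.
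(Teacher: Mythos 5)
Your reduction ``Araki--Lieb--Thirring logarithmic submajorisation plus a Hardy--Littlewood--P\'olya transfer through the increasing convex function $t\mapsto-\log(1-e^t)$'' is exactly the mathematics underlying the paper's proof: the paper simply packages both steps into a single citation of \cite[Proposition~2.4]{Han2016} applied to $g(t)=-\log(1-t)$, after first replacing $x,y$ by the invertible strict contractions $x_t=(x+t\un)/(1+2t)$, $y_t=(y+t\un)/(1+2t)$ and letting $t\to0^+$. That perturbation is worth keeping in your write-up: it is what guarantees $\mu_s(\cdot)<1$ everywhere, so that $\log(1-\mu_s(\cdot))$ is finite and the HLP comparison applies without worrying about $-\infty$ values. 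Your positive-operator computation via Lemma~\ref{lemma 6}, $|xy|^{2r}=(yx^2y)^r$ and $|x^ry^r|^2=y^rx^{2r}y^r$, is correct and is in fact more self-contained than the paper's appeal to \cite{Han2016}.

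The one genuine gap is the step you yourself flag: the passage from positive operators to general contractions. The route through $|xy|^2=|y|\,v^*|x|^2v\,|y|$ does not work, for the reason you give --- $(v^*|x|^2v)^r\neq v^*|x|^{2r}v$, and a pointwise bound $\mu_s(v^*|x|^2v)\le\mu_s(|x|^2)$ does not survive being sandwiched inside $\Lambda_t$ of an $r$-th power. The correct move, and the one the paper makes, is the identity $\mu_s(xy)=\mu_s\bigl(|x|\,|y^*|\bigr)$ (proved from $\mu(a^*a)=\mu(aa^*)$ applied to $a=|x|y$), which reduces the whole statement to the positive case with the pair $(|x|,|y^*|)$ --- no partial isometry ever enters Lemma~\ref{lemma 6}. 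Note, however, that this yields the inequality with $\mu_s(|x|^r|y^*|^r)$ on the right-hand side, not $\mu_s(|x|^r|y|^r)$; these need not coincide for non-normal $y$, so the lemma as literally stated is only obtained for the adjoint-form right-hand side (which is also the form actually used in the proof of Theorem~\ref{mainthm2}). So your instinct that ``an additional hypothesis may be required'' is on target: either restrict to positive $x,y$, or state the conclusion with $|y^*|$ in place of $|y|$.
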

\begin{proof}
Let $g(t)=-\log(1-t)$, then $g(x)$ is increasing and convex on $(0,1)$. For $t>0$, set
\[x_t=\frac{x+t\un}{1+2t}~\text{and}~y_t=\frac{y+t\un}{1+2t},\]
then $x_t,~y_t$ are invertible and $\|x_t\|<1,~\|y_t\|<1$.
From the property of polar decomposition of $x_1$ and $x_2$, we obtain $\mu(x_1x_2)=\mu(|x_1||x_2^*|)$.
Using \cite[proposition 2.4]{Han2016},
we get
 $$\int_0^h \mu_s(g(|x_ty_t|^r))ds \leq \int_0^h \mu_s(g(||x_t|^r|y_t|^r|))ds.$$
Since $-\int_0^h \log (1-\mu_t(|x_t y_t|^r))ds\leq -\int_0^h \log(1-\mu_t(x_t^r y_t^r))ds$, we have
$$\int_0^h \log (1-\mu_t(|x_t y_t|^r))ds \geq \int_0^h \log(1-\mu_t(x_t^r y_t^r))ds.$$
Taking limits with $t\rightarrow 0^+$, $x_t\rightarrow x,~y_t\rightarrow y$,
the general case follows that $\mu_s(xy)=\mu_s(x_t y_t)$.
Then we get the results from the Lemma \ref{lemma 10}.
\end{proof}

\begin{lemma}\label{afa proposition 2.4}
Let $r\geq1$ and $f$ be a continuous increasing function on $[0, \infty)$ such that
$f(0)=0$ and
$t\rightarrow f(e^t)$ is convex. For $0\leq x, y\in L_0(\mathcal{M})$, we have
\begin{equation*}
 \int_0^t\mu_s^l(f(|xy|^r))ds\leq \int_0^t\mu_s^l(f(|x^ry^r|))ds.
\end{equation*}
\end{lemma}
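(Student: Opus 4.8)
The plan is to split the proof into two self-contained pieces: an Araki--Lieb--Thirring type logarithmic submajorisation relating $|xy|^r$ and $|x^ry^r|$, and a transfer step that promotes logarithmic submajorisation to ordinary submajorisation once one composes with $f$. The convexity of $t\mapsto f(e^t)$ enters only in the second piece, and it is exactly the hypothesis that makes the promotion legitimate.

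First I would establish, for positive $x,y$ and $r\ge1$, the estimate
\[
\Lambda_t(|xy|^r)\le\Lambda_t(|x^ry^r|),\qquad t>0 .
\]
Since $x,y\ge0$ we have $|xy|^2=yx^2y$ and $|x^ry^r|^2=y^rx^{2r}y^r$, so by (\ref{04}) the two sides equal $\Lambda_t(yx^2y)^{r/2}$ and $\Lambda_t(y^rx^{2r}y^r)^{1/2}$; squaring, the claim reduces to $\Lambda_t\big((yx^2y)^r\big)\le\Lambda_t\big(y^r(x^2)^ry^r\big)$, which is precisely the second inequality of Lemma \ref{lemma 6} applied to the positive operators $x^2,y$ with exponent $p=r\ge1$. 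For general $0\le x,y\in L_0(\mathcal{M})$ I would first run this for bounded $x,y\in\mathcal{M}_+$ and then pass to the limit by spectral truncation $x\wedge n,\ y\wedge n$, invoking the continuity of $t\mapsto\mu_t(\cdot)$ under such approximations and monotone convergence. In $\Lambda_t$-language this reads $|xy|^r\prec_{wlog}|x^ry^r|$, i.e. $\int_0^t\log\mu_s(|xy|^r)\,ds\le\int_0^t\log\mu_s(|x^ry^r|)\,ds$ for every $t$.

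Next I would carry out the transfer. Set $\phi=f\circ\exp$, which by hypothesis is nondecreasing and convex and satisfies $\phi(-\infty)=f(0)=0$, and put $\alpha(s)=\log\mu_s^l(|xy|^r)$, $\beta(s)=\log\mu_s^l(|x^ry^r|)$, both nonincreasing. The previous step gives $\int_0^t\alpha\le\int_0^t\beta$ for all $t$, since $\mu$ and $\mu^l$ differ only on a countable set and hence have the same integrals. Because $\phi$ is nondecreasing and convex it preserves weak submajorisation of nonincreasing functions, so $\int_0^t\phi(\alpha)\,ds\le\int_0^t\phi(\beta)\,ds$. Finally, as $f$ is continuous and increasing, the identity (\ref{pre 1.2}) and its left-continuous analogue give $\phi(\alpha(s))=f(\mu_s^l(|xy|^r))=\mu_s^l(f(|xy|^r))$ and likewise for $\beta$, which turns the last inequality into
\[
\int_0^t\mu_s^l(f(|xy|^r))\,ds\le\int_0^t\mu_s^l(f(|x^ry^r|))\,ds .
\]

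I expect the transfer step to be the main obstacle: one must justify carefully that a nondecreasing convex $\phi$ preserves the weak submajorisation $\alpha\prec_w\beta$ at the level of functions on $(0,1)$, while controlling the boundary behaviour where $\mu_s$ may vanish (so $\alpha=-\infty$, absorbed by $\phi(-\infty)=0$) or may blow up in the unbounded case. The cleanest route is the classical Hardy--Littlewood--P\'olya fact that increasing convex functions preserve $\prec_w$; I would either cite it from the majorisation literature in the bibliography or reprove it by the standard argument reducing $\int_0^t\phi(\alpha)\le\int_0^t\phi(\beta)$ to the cumulative inequality $\int_0^t\alpha\le\int_0^t\beta$ together with the monotonicity and convexity of $\phi$.
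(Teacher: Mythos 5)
Your argument is correct and is essentially the same route the paper takes: the paper's ``proof'' consists only of a pointer to \cite[Proposition 2.4]{Han2016} together with the remark that the left-continuous singular value function $\mu^l$ behaves like $\mu$, and what you have written out --- the Araki--Lieb--Thirring logarithmic submajorisation $|xy|^r\prec_{wlog}|x^ry^r|$ (which is the paper's own Lemma \ref{lemma 6} after the reduction $|xy|^2=yx^2y$), followed by the Hardy--Littlewood--P\'olya transfer using the convexity of $f\circ\exp$ and the identity $f(\mu^l_s(z))=\mu^l_s(f(z))$ --- is precisely the content of that cited proof. There is no substantive discrepancy to report.
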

\begin{proof}
Replacing left-continuous with right-continuous, $\mu_t^l(|x|)$ have the same property as $\mu_t(x)$.
 By slightly modifying the proof of in \cite[Proposition 2.4]{Han2016}, we can prove the lemma and omit the details.
\end{proof}

\begin{lemma} \label{lemmar3rr}
Let $x,~y\in \mathcal{M}$ be two contractive operators and $r\geq1$. Then
\[\int_0^t\log(1-\mu_s^l(|xy|^r))ds\geq\int_0^t \log(1-\mu_s^l(|x|^r|y|^r))ds.\]
Moreover,
\[\int_{1-t}^1\log\mu_s(\un- |xy|^r)ds\geq\int_{1-t}^1 \log\mu_s(\un-||x|^r|y|^r|)ds.\]
Thus $\Delta(\un- |xy|^r)\leq\Delta(\un-||x|^r|y|^r|).$
\end{lemma}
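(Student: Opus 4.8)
The plan is to run the argument of Lemma \ref{lemmar2rr} essentially verbatim, with the right-continuous singular number $\mu$ replaced throughout by the left-continuous one $\mu^l$, invoking Lemma \ref{afa proposition 2.4} in place of the right-continuous result of Han; the ``Moreover'' line and the determinant bound are then extracted purely formally from the first inequality. Concretely, set $g(t)=-\log(1-t)$ on $[0,1)$. Then $g$ is continuous and increasing, $g(0)=0$, and $g(e^{t})=-\log(1-e^{t})$ has second derivative $e^{t}/(1-e^{t})^{2}>0$, so $t\mapsto g(e^{t})$ is convex; thus $g$ meets the hypotheses of Lemma \ref{afa proposition 2.4}.

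First I would establish the first displayed inequality. Exactly as in the proof of Lemma \ref{lemmar2rr}, regularise by putting $x_\varepsilon=(x+\varepsilon\un)/(1+2\varepsilon)$ and $y_\varepsilon=(y+\varepsilon\un)/(1+2\varepsilon)$, which are invertible strict contractions, and use the polar-decomposition identity $\mu^l(x_\varepsilon y_\varepsilon)=\mu^l(|x_\varepsilon|\,|y_\varepsilon^{*}|)$ to reduce to the positive operators $|x_\varepsilon|$ and $|y_\varepsilon^{*}|$. Applying Lemma \ref{afa proposition 2.4} with $f=g$ and using $\mu_s^l(g(z))=g(\mu_s^l(z))=-\log(1-\mu_s^l(z))$ (the $\mu^l$-analogue of (\ref{pre 1.2})) gives, with $\mu_s^l(|x_\varepsilon|^r|y_\varepsilon|^r)$ on the right exactly as in Lemma \ref{lemmar2rr},
\[
-\int_0^t\log\bigl(1-\mu_s^l(|x_\varepsilon y_\varepsilon|^r)\bigr)\,ds\le -\int_0^t\log\bigl(1-\mu_s^l(|x_\varepsilon|^r|y_\varepsilon|^r)\bigr)\,ds ,
\]
and multiplying by $-1$ reverses the inequality. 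Letting $\varepsilon\to0^{+}$, so that $x_\varepsilon\to x$, $y_\varepsilon\to y$ and $\mu_s(x_\varepsilon y_\varepsilon)\to\mu_s(xy)$, yields the first inequality for $x,y$.

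Next I would deduce the ``Moreover'' statement from this. Both $|xy|^r$ and $|x|^r|y|^r$ are contractions (products of contractions with $r\ge1$), so the first identity of Lemma \ref{lemma 10} applies and gives $\mu_s(\un-|xy|^r)=1-\mu_{1-s}^l(|xy|^r)$ and $\mu_s(\un-||x|^r|y|^r|)=1-\mu_{1-s}^l(|x|^r|y|^r)$, the latter because $\mu^l$ depends only on the modulus. Substituting $s\mapsto1-s$ in the first inequality and invoking these two identities converts it, term by term, into
\[
\int_{1-t}^1\log\mu_s(\un-|xy|^r)\,ds\ge\int_{1-t}^1\log\mu_s(\un-||x|^r|y|^r|)\,ds ,
\]
which is the second display. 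Finally, specialising to $t=\tau(\un)=1$ turns both integrals into integrals over $(0,1)$, and since $\Delta(a)=\exp\int_0^1\log\mu_s(a)\,ds$ for a positive operator $a$, applying the increasing map $\exp$ yields the stated Fuglede--Kadison determinant comparison between $\Delta(\un-|xy|^r)$ and $\Delta(\un-||x|^r|y|^r|)$.

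Because Lemma \ref{afa proposition 2.4} already carries the analytic weight, the remaining work is essentially bookkeeping, and I expect the only genuinely delicate points to be two. First, one must check that the positivity hypothesis of Lemma \ref{afa proposition 2.4} is legitimately met after the polar-decomposition reduction and that the regularisation limit $\varepsilon\to0^{+}$ is valid for the left-continuous singular number (the $\mu^l$-analogue of the convergence used in Lemma \ref{lemmar2rr}). Second, the reflection $s\mapsto1-s$ coupling $\mu^l$ at $1-s$ to $\mu$ at $s$ through Lemma \ref{lemma 10} must be applied with the correct orientation on \emph{both} sides simultaneously; getting this bookkeeping right is what makes the passage from the $\mu^l$-inequality to the $\mu$-inequality, and hence to the determinant bound, go through.
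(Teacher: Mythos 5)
Your proposal follows exactly the route the paper takes: its entire proof of this lemma is the single sentence ``Using Lemma \ref{afa proposition 2.4}, the proof can be done similarly to Lemma \ref{lemmar2rr}'', and you have simply filled in those details correctly --- the regularisation by $x_\varepsilon,y_\varepsilon$, the application of Lemma \ref{afa proposition 2.4} to $g(t)=-\log(1-t)$, and the reflection $s\mapsto 1-s$ through Lemma \ref{lemma 10} are all the intended steps.

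One caution on the final clause: the inequality you actually obtain at $t=\tau(\un)=1$ is $\int_0^1\log\mu_s(\un-|xy|^r)\,ds\geq\int_0^1\log\mu_s(\un-||x|^r|y|^r|)\,ds$, which after exponentiating gives $\Delta(\un-|xy|^r)\geq\Delta(\un-||x|^r|y|^r|)$, \emph{not} the ``$\leq$'' printed in the lemma. The printed direction appears to be a typo (the Remark at the end of the paper records the analogous determinant inequalities with ``$\geq$'', consistent with what your argument produces), so you should state the direction explicitly rather than appeal to ``the stated comparison'', which as written does not follow from your (correct) integral inequality.
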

\begin{proof}
Using Lemma \ref{afa proposition 2.4}, the proof can be done similarly to Lemma \ref{lemmar2rr}.
The details are omitted.
\end{proof}

A generalized H\"{o}lder type eigenvalue inequality (\ref{ineq 1.004}) be extended to the following theorem.

\begin{theorem}\label{mainthm2}
 Suppose $x_1,\ldots,x_m\in \mathcal{M}$ are contractive operators, $r\geq 1$
 and $p_1,\ldots,p_m>0$ with $\sum_{i=1}^m\frac{1}{p_i}=1$.
 Then for all $t>0$, we have
\begin{equation}\label{main2}
\int_0^t \log (1-\mu_s(|x_1\cdots x_m|)^r)ds \geq \sum_{i=1}^m\int_0^t \log (1-\mu_s(|x_i|)^{rp_i})^{\frac{1}{p_i}}ds.
\end{equation}
Moreover,
\begin{equation}\label{main3}
\int_{1-t}^1 \log \mu_s^l(\un- |x_1\cdots x_m|^r)ds \geq \sum_{i=1}^m\int_{1-t}^1 \log \mu_s^l(\un-|x_i|^{rp_i})^{\frac{1}{p_i}}ds.
\end{equation}
\end{theorem}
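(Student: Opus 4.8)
The plan is to deduce both displays from the multiplicativity of $\Lambda_t$ recorded in (\ref{05}), by transporting the resulting log-submajorisation through the function $c\mapsto\log(1-c)$. Write $a(s)=\mu_s(x_1\cdots x_m)^r$ and, for $1\le i\le m$, $b_i(s)=\mu_s(x_i)^{rp_i}$, and set $b(s)=\prod_{i=1}^m b_i(s)^{1/p_i}=\bigl(\prod_{i=1}^m\mu_s(x_i)\bigr)^r$. Since each $x_i$, hence the product $x_1\cdots x_m$, is a contraction, all these functions take values in $[0,1]$. Using $\mu_s(|z|^c)=\mu_s(z)^c$ from (\ref{pre 1.2}) together with (\ref{04}), the target (\ref{main2}) is exactly $\int_0^t\log(1-a(s))\,ds\ge\sum_i\frac1{p_i}\int_0^t\log(1-b_i(s))\,ds$.

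First I would establish the log-submajorisation $\int_0^u\log a(s)\,ds\le\int_0^u\log b(s)\,ds$ for every $u\in(0,t]$. Indeed, iterating (\ref{05}) gives $\Lambda_u(x_1\cdots x_m)\le\prod_i\Lambda_u(x_i)$, and taking logarithms yields $\int_0^u\log\mu_s(x_1\cdots x_m)\,ds\le\sum_i\int_0^u\log\mu_s(x_i)\,ds$; multiplying by $r$ and recalling $\log a=r\log\mu_s(x_1\cdots x_m)$ and $\log b=r\sum_i\log\mu_s(x_i)$ gives the claim. Thus $a$ is weakly log-submajorised by $b$ on $(0,t)$.

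The crux is the transfer step. The function $f(c)=-\log(1-c)$ is increasing and convex on $[0,1)$, and a direct computation shows that $x\mapsto f(e^x)=-\log(1-e^x)$ is convex and increasing on $(-\infty,0)$. By the standard majorisation principle that weak log-submajorisation passes, under any such $f$, to ordinary weak submajorisation, the comparison above yields $\int_0^t f(a(s))\,ds\le\int_0^t f(b(s))\,ds$, that is $\int_0^t\log(1-a(s))\,ds\ge\int_0^t\log(1-b(s))\,ds$. It remains to compare $b$ with the $b_i$: by the weighted arithmetic--geometric mean inequality $b(s)=\prod_i b_i(s)^{1/p_i}\le\sum_i\frac1{p_i}b_i(s)$, so $1-b(s)\ge\sum_i\frac1{p_i}(1-b_i(s))$, and concavity of $\log$ gives $\log(1-b(s))\ge\sum_i\frac1{p_i}\log(1-b_i(s))$ pointwise. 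Integrating and chaining the two displays proves (\ref{main2}). For the ``moreover'' part (\ref{main3}) I would invoke Lemma \ref{lemma 10}: since $|x_1\cdots x_m|^r$ and each $|x_i|^{rp_i}$ are positive contractions, $\mu_s^l(\un-z)=1-\mu_{1-s}(z)$, so $\log\mu_s^l(\un-z)=\log(1-\mu_{1-s}(z))$; the substitution $u=1-s$ then identifies each side of (\ref{main3}) with the corresponding side of (\ref{main2}), whence (\ref{main3}) follows at once.

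The hard part will be making the transfer step rigorous in the present continuous, trace-theoretic setting rather than for finite vectors: one must justify that weak log-submajorisation of the decreasing functions $a,b$ passes to weak submajorisation of $f(a),f(b)$ with $f(c)=-\log(1-c)$, and one must control the boundary behaviour where $\mu_s=1$, at which $\log(1-\cdot)=-\infty$. The cleanest remedy is to replace each $x_i$ by $(1-\varepsilon)x_i$, run the argument for the strictly contractive operators where every quantity is finite, and let $\varepsilon\to0^+$ using right-continuity of $\mu$ and the convergence $\mu_s((1-\varepsilon)x_i)\to\mu_s(x_i)$; alternatively, one observes that the log-submajorisation itself forces $a=1$ and $b=1$ to occur on the same set, so both sides degenerate to $-\infty$ simultaneously and the inequality is vacuous there.
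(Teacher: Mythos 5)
Your argument is correct, but it follows a genuinely different route from the paper's. The paper proves the case $m=2$ by operator-algebraic means --- the Young inequality $\mu(xy^*)\leq\mu(\tfrac1p|x|^p+\tfrac1q|y|^q)$ of Farenick--Manjegani combined with the Bourin--Hiai unitary-approximation lemma, yielding the pointwise singular-value inequality $\mu_s(\un-|xy^*|)\geq\mu_s(|(\un-|x|^p)^{1/p}(\un-|y|^q)^{1/q}|)$ --- then passes through Lemmas \ref{lemma 10} and \ref{lemmar2rr} (the latter resting on an Araki--Lieb--Thirring-type result) and finally inducts on $m$. You instead start from the submultiplicativity (\ref{05}) of $\Lambda_t$, transfer the resulting log-submajorisation through $f(c)=-\log(1-c)$ (whose composition with $\exp$ is indeed increasing and convex on $(-\infty,0)$, since $\tfrac{d^2}{dx^2}\bigl(-\log(1-e^x)\bigr)=e^x(1-e^x)^{-2}>0$), and finish with the scalar weighted arithmetic--geometric mean inequality; no induction is needed, and since none of your three steps uses $r\geq1$, you in fact obtain the theorem for every $r>0$. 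The transfer step you flag as ``the hard part'' is the standard fact that weak submajorisation of decreasing functions is preserved by increasing convex functions, applied to $\log a\prec_w\log b$ and $g(x)=-\log(1-e^x)$; in the continuous setting it follows from the integral representation $g(x)=\int(x-\lambda)_+\,d\nu(\lambda)$ with $\nu\geq0$ and is available in the literature the paper already cites (e.g.\ \cite{DDSZ2020}), so it is not a genuine obstruction. Your observation that $a(s)=1$ forces every $b_i(s)=1$ (because $\mu_s(x_1\cdots x_m)\leq\mu_s(x_i)$ by (\ref{equa 2.1})) already disposes of the boundary degeneracy without the $\varepsilon$-regularisation. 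What the paper's route buys that yours does not is the stronger pointwise (rather than integrated) intermediate inequality for $m=2$; what yours buys is brevity, the wider range of $r$, and independence from the operator Young inequality. Your reduction of (\ref{main3}) to (\ref{main2}) via Lemma \ref{lemma 10} and the substitution $u=1-s$ matches the paper's treatment.
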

\begin{proof} Fixed $x,~y\in \mathcal{M}$ and $p,~q>0$ with $\frac{1}{p}+\frac{1}{q}=1$.
By \cite[Theorem 3.3]{FM2005}, we have
$\mu(xy^*)\leq\mu(\frac{1}{p}|x|^p+\frac{1}{q}|x|^q).$
Then $\lambda_t(xy^*)\leq\lambda_t(\frac{1}{p}|x|^p+\frac{1}{q}|x|^q).$
It follows from  \cite[Lemma 3.3]{BH2015} that
there is a unitary operator $U\in\mathcal{M}$  such that
\[U|xy^*|U^*\leq \frac{1}{p} |x|^p+\frac{1}{q}|y|^q+\varepsilon_1 \mathbb{I},\]
for every $\varepsilon_1>0.$
Let $x,~ y\in \mathcal{M}$ be contractive operators. Then
$$\un-|x|^p\geq0,~\un-|y|^q\geq0.$$
Thus there exists a unitary operator $V\in\mathcal{M}$ such that
\begin{eqnarray*}
\un-U|xy^*|U^*&\geq& \frac{1}{p} (\un-|x|^p)+\frac{1}{q}(\un-|y|^q)-\varepsilon_1 \un\\
&\geq & V| (\un-|x|^p)^\frac{1}{p}(I-|y|^q)^\frac{1}{q}|V^*-\varepsilon_2 \un-\varepsilon_1 \un,
\end{eqnarray*}
for a fixed $\varepsilon_2>0.$
By \cite[Lemma 2.4]{DD1992} and the property of rearrangements, we get
\begin{eqnarray*}
\mu_s(\un-|xy^*|)&=&\mu_s(\un-U|xy^*|U^*)\\
&\geq &\mu_s(| (\un-|x|^p)^\frac{1}{p}(\un-|y|^q)^\frac{1}{q}|)-(\varepsilon_2 +\varepsilon_1 )\mu_s(\un)).
\end{eqnarray*}
Letting $\varepsilon_1\rightarrow0$ and  $\varepsilon_2\rightarrow0$, we deduce
\[\mu_s(\un-|xy^*|)\geq \mu_s(| (\un-|x|^p)^\frac{1}{p}(\un-|y|^q)^\frac{1}{q}|).\]

On the other hand,
it is clear that
\begin{eqnarray*}
\tau(\chi_{(s,\infty)}(\un-x))&=&\tau(\chi_{(s,\infty)}(\un-x))\\
&=&\int_s^\infty\chi_{(s,\infty)}(1-t)d E_t(x)\\
&=&\int_0^{1-s}\chi_{(s,\infty)}(t)d E_t(x)+\int_{1+s}^\infty\chi_{(s,\infty)}(t)d E_t(x)\\
&=&\tau(\chi_{[0,1-s)}(x))+\tau(\chi_{(1+s,\infty)}(x))
\end{eqnarray*}
and
\[\tau(\chi_{(s,\infty)}(x)=m\{r>0:\mu_r(x)>s\}.\]

Since $\tau(\un)=1$, it from the normality of $\tau$, we have
\[\tau(\chi_{[s,\infty)}(x)=m\{r\geq 0:\mu_r(x)\geq  s\}.\]

Hence, for $s\geq 0$, we have
\begin{eqnarray*}
\tau(\chi_{(s,\infty)}(\un-x))&=& m\{r\geq 0:0\leq r\leq \tau(\un)~\text{and}~\mu_r(x)<1-s\}\\
&&+m\{r\geq 0:\mu_r(x)> 1+ s\}\\
&=&m\{r\geq 0:\mu_r(\un)-\mu_r(x)>s\}.
\end{eqnarray*}
Consequently, let $f(t)=\mu_t(\un)-\mu_t(|xy|)$, we get
\begin{equation}\label{a1}\mu_s(\un-|xy|)=\mu_s(f(t)).
\end{equation}
Since $\mu_t(\un)-\mu_t(|xy|)=1-\mu_t(|xy|),~0< t< 1$, by Lemma \ref{lemma 10} and Equation (\ref{a1}),
\[\mu_s(\un-|xy|)=1-\mu_{1-s}^l(|xy|),~0< s< 1\] and
\[\mu_s^l(\un-|xy|)=1-\mu_{1-s}(|xy|),~0< s< 1.\]
Combining this with Lemma \ref{lemmar2rr}, we show that
\begin{eqnarray*}
\int_0^t \log(1-\mu_s(|x y|))ds&=&\int_0^t \log(\mu_{1-s}^l(\un-|x y|))ds\\
&\geq &\int_0^t \log(\mu_{1-s}^l(| (\un-|x|^p)^\frac{1}{p}(\un-|y^*|^q)^\frac{1}{q}|))ds\\
&\geq &\int_0^t \log(\mu_{1-s}^l(| (\un-|x|^p)^\frac{1}{p})\mu_{1-s}^l((\un-|y^*|^q)^\frac{1}{q}|))ds\\
&= &\int_0^t \log( (1-\mu_{s}(|x|)^p)^\frac{1}{p}(1-\mu_s(|y^*|)^q)^\frac{1}{q}|)ds.\\
\end{eqnarray*}
Replacing $x$ and $y$ by $|x_1|^r$ and $|x_2|^r$, we have
\[\int_0^t \log(1-\mu_s(|x_1|^r|x_2|^r))ds \geq \int_0^t \log( (1-\mu_{s}(|x_2|^r)^p)^\frac{1}{p}(1-\mu_s(|x_2|^r)^q)^\frac{1}{q}|)ds.\]
Using the property of polar decomposition of $x_1$ and $x_2$, we obtain $\mu(x_1x_2)=\mu(|x_1||x_2^*|)$.
Hence, by Lemma \ref{lemmar2rr} and (\ref{equa 2.1}) and (\ref{pre 1.2}), we deduce
\begin{eqnarray*}\int_0^t \log(1-\mu_s(|x_1x_2|^r))ds
&=&\int_0^t \log(1-\mu_s(|x_1||x_2^*|)^r)ds\\
&\geq &\int_0^t \log(1-\mu_s(|x_1|^r|x_2^*|^r))ds\\
&\geq &\int_0^t \log( (1-\mu_{s}(|x_1|^{rp})^p)^\frac{1}{p}(1-\mu_s(|x_2^*|^{rq})^q)^\frac{1}{q}|)ds\\
&=&\int_0^t \log( (1-\mu_{s}(|x_1|^{rp})^\frac{1}{p}(1-\mu_s(|x_2|^{rq})^\frac{1}{q}|)ds.
\end{eqnarray*}
So, the inequality (\ref{main2}) holds in $m=2$ case.

Suppose the inequality (\ref{main2}) holds for a fixed $m(\geq 2)$ and $r\geq 1$. Let $x_1, \ldots, x_m$, $x_{m+1}\in \mathcal{M}$ are contraction operators, $r\geq 1$ and $p_1, \ldots, p_m$, $p_{m+1}>0$ with $\sum_{i=1}^{m+1}\frac{1}{p_i}=1$. Let $p=\sum_{i=1}^{m}\frac{1}{p_i}$, i.e., $\frac{1}{p}+\frac{1}{p_{m+1}}=1$.
By the above discussion,
we have
\begin{eqnarray*}
&&\int_0^t \log(1-\mu_s(|x_1\cdots x_m x_{m+1}|^r))ds\\
&=&\int_0^t \log( (1-\mu_{s}(|x_1\cdots x_m|^{rp})^\frac{1}{p}(1-\mu_s(|x_{m+1}|^{rp_{m+1}})^\frac{1}{p_{m+1}}|)ds\\
&\geq &\sum_{i=1}^m\int_0^t (\log( (1-\mu_{s}(|x_i|^{rp\cdot\frac{p_i}{p}}))^{\frac{p_i}{p}\cdot\frac{1}{p}}ds+\int_0^t \log((1-\mu_s(|x_{m+1}|^{rp_{m+1}})^\frac{1}{p_{m+1}}|)ds\\
&=&\sum_{i=1}^{m+1}\int_0^t \log((1-\mu_s(|x_{m+1}|^{rp_{m+1}})^\frac{1}{p_{m+1}}|)ds.\\
\end{eqnarray*}
So, the inequality (\ref{main2}) holds for $m+1$ case. That is, (\ref{main2}) holds for every finite $m$.
  Finally, inequality (\ref{main3}) can be obtained by direct calculation and Lemma \ref{lemma 10}.
\end{proof}

\begin{corollary}\label{corollary 13}
Suppose $x_1, x_2, \ldots, x_m\in \mathcal{M}$ are contractive operators and $p_1,\ldots, p_m>0$ satisfies $\sum_{i=1}^m \frac{1}{p_i}=1$.
 Then for all $t>0$, we have
\begin{equation*}\label{main4}
\int_0^t \log (1-\mu_{s}(|x_1\cdots x_m|))ds \geq \sum_{i=1}^m\int_0^t \log (1-\mu_s(|x_i|)^{p_i})^{\frac{1}{p_i}}ds.
\end{equation*}
Moreover, if $x_1\cdots x_m$ is self-adjoint, we have
\begin{eqnarray*}\label{main5}
\int_{1-t}^1\log \mu_s^l(\un- x_1\cdots x_m)ds &\geq&
\int_{1-t}^1\log \mu_s^l(\un- |x_1\cdots x_m|)ds \\
&\geq& \sum_{i=1}^m\int_{1-t}^1\log\mu_s^l(\un - |x_i|^{p_i})^{\frac{1}{p_i}}ds.
\end{eqnarray*}
\end{corollary}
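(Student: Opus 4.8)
The plan is to recognize that the statement is, up to one order comparison, simply the $r=1$ specialization of Theorem \ref{mainthm2}, so almost nothing new has to be proved. The first displayed inequality is obtained at once by setting $r=1$ in inequality (\ref{main2}): the hypotheses on $x_1,\ldots,x_m$ and on $p_1,\ldots,p_m$ coincide verbatim, and substituting $r=1$ turns (\ref{main2}) into
\[
\int_0^t \log(1-\mu_s(|x_1\cdots x_m|))\,ds \geq \sum_{i=1}^m\int_0^t \log(1-\mu_s(|x_i|)^{p_i})^{\frac{1}{p_i}}\,ds,
\]
which is exactly the claimed bound. No further computation is required for this part.

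For the ``moreover'' chain I would treat its two inequalities separately. The right-hand inequality is again just the $r=1$ instance of (\ref{main3}) in Theorem \ref{mainthm2}, hence immediate. For the left-hand inequality I would first note that, since each $x_i$ is a contraction, the product $a:=x_1\cdots x_m$ is a contraction, and by hypothesis it is self-adjoint. For a self-adjoint contraction one has $a\leq |a|$ (decomposing $a=a_+-a_-$ gives $|a|-a=2a_-\geq0$), so that $0\leq \un-|a|\leq \un-a$. Invoking the monotonicity of the generalized singular number — which, by the remark preceding Lemma \ref{afa proposition 2.4}, is shared by the left-continuous function $\mu^l$ — yields $\mu_s^l(\un-|a|)\leq \mu_s^l(\un-a)$ for every $s$; this is precisely the left-continuous analogue of Lemma \ref{lemma 12}. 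Since $\log$ is increasing, applying it and integrating over $[1-t,1]$ preserves the inequality and produces
\[
\int_{1-t}^1\log\mu_s^l(\un-x_1\cdots x_m)\,ds \geq \int_{1-t}^1\log\mu_s^l(\un-|x_1\cdots x_m|)\,ds,
\]
which is the left-hand inequality of the chain.

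The only point that is not entirely mechanical is the transfer of Lemma \ref{lemma 12} from $\mu$ to $\mu^l$, i.e.\ the implication $\un-|a|\leq \un-a \Rightarrow \mu_s^l(\un-|a|)\leq \mu_s^l(\un-a)$. This rests solely on the order-preservation of $\mu^l$ recorded in the remark before Lemma \ref{afa proposition 2.4}, so once that observation is made explicit the corollary follows directly from Theorem \ref{mainthm2}. I therefore expect no serious obstacle; the proof amounts to specializing $r=1$ and combining this with the self-adjoint comparison above.
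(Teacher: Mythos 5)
Your proposal is correct and follows essentially the same route as the paper: the displayed inequalities are the $r=1$ specializations of Theorem \ref{mainthm2}, and the extra left-hand inequality in the ``moreover'' chain comes from the operator comparison $\un-|a|\leq\un-a$ for the self-adjoint contraction $a=x_1\cdots x_m$. The only (harmless) difference is that you apply monotonicity directly to $\mu^{l}$, whereas the paper invokes Lemma \ref{lemma 12} for $\mu$ and then transfers to $\mu^{l}$ via Lemma \ref{lemma 10}; your shortcut is if anything cleaner, since it avoids stretching Lemma \ref{lemma 10} beyond operators of the form $\un-|x|$.
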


\begin{proof}By the proof of theorem \ref{mainthm2}, we have
\begin{eqnarray*}\label{main6}
\int_0^t \log (1-\mu_{s}(|x_1\cdots x_m|))ds &=&
\int_0^t \log \mu_{1-s}^l(\un-|x_1\cdots x_m|)|ds \\
&=& \int_0^t \log(1-\mu_s(|x_1\cdots x_m|))ds \\
&\geq& \sum_{i=1}^m\int_0^t \log (1-\mu_s(|x_i|)^{p_i})^{\frac{1}{p_i}}ds.\\
\end{eqnarray*}
By Lemma \ref{lemma 12}, we get
\[\mu_s(\un- x_1\cdots x_m)\geq \mu_s(\un- |x_1\cdots x_m|).\]
Then we obtain the results from the Lemma \ref{lemma 10}.
\end{proof}

\begin{remark} From Theorem \ref{mainthm2} and \ref{corollary 13}, we have
$$\Delta(\un-|x_1\cdots x_m|^r)\geq\Pi_{i=1}^m\Delta(\un-|x_i|^{rp_i})^{\frac{1}{p_i}}$$
 and
\begin{eqnarray*}\label{main5}
\Delta(\un-|x_1\cdots x_m|)
\geq \Pi_{i=1}^m\Delta(\un-|x_i|^{p_i})^{\frac{1}{p_i}}.
\end{eqnarray*}
\end{remark}


%
%

\section*{ Funding}
This work is supported by project No. 11801486, No. 11761067
of National Nature Science Foundation of China.

%
%
%
%


\bibliographystyle{amsplain}

\end{document}